\documentclass[a4paper, 12pt, 
]{amsart}
\usepackage{ifpdf}
\ifpdf
\else
  \PassOptionsToPackage{dvipdfmx}{graphicx}
  \PassOptionsToPackage{dvipdfmx}{xcolor}
  \PassOptionsToPackage{dvipdfmx}{hyperref}
\fi

\usepackage{amsthm}

\IfFileExists{ulem.sty}{\usepackage[normalem]{ulem}}{} %
\usepackage{graphicx,xcolor}
\IfFileExists{dsfont.sty}{\usepackage{dsfont}}{}
\usepackage{enumitem}
\usepackage{tikz}
\usepackage{tikz-cd}
\usepackage[utf8]{inputenc}
\usepackage[T1]{fontenc}
\usepackage[all,cmtip]{xy}
\usepackage{todonotes}
\usepackage[titletoc]{appendix}
\usepackage{color}
\usepackage{amsmath,amssymb,amscd,amsfonts}
\usepackage[english]{babel}
\usepackage{amstext}
\usepackage{amsmath}
\usepackage{amsfonts}
\usepackage{latexsym}
\usepackage{ifthen}
\usepackage{xypic}
\IfFileExists{yfonts.sty}{\usepackage{yfonts}}{}
\usepackage[all,cmtip]{xy}
\xyoption{all}
\usepackage{enumitem}

\usepackage[pagebackref]{hyperref}
\hypersetup{
  colorlinks=true,
  linkcolor=magenta,
  urlcolor=red,
  citecolor=blue
}

\setlist[enumerate]{label=(\thethm.\arabic*), before={\setcounter{enumi}{\value{equation}}}, after={\setcounter{equation}{\value{enumi}}}}

\newcommand{\Supp}[0]{\operatorname{Supp}}

\newcommand{\Id}{\mathrm{Id}}

\newcommand{\deldel}{\sqrt{-1}\partial \overline{\partial}}
\newcommand{\e}{\varepsilon}

\newcommand{\idelbar}{\sqrt{-1}\,\partial\bar\partial}

\newcommand{\R}{\mathbb{R}}

\renewcommand{\epsilon}{\varepsilon}

\newcommand{\la}{\langle}

\newcommand{\ra}{\rangle}

\renewcommand{\leq}{\leqslant}
\renewcommand{\geq}{\geqslant}

\newcommand{\dbar}{\bar \partial}

\DeclareMathOperator{\id}{id} %

\newtheorem{thm}{Theorem}[section]

\theoremstyle{remark}

\theoremstyle{definition}

\newtheorem{defn}[thm]{Definition}

\theoremstyle{definition} % ←本文がnormal font（ローマン体）になる

\numberwithin{equation}{thm}

\theoremstyle{remark}
\newtheorem{remark}[thm]{Remark}

\IfFileExists{MnSymbolC10.tfm}{%
\DeclareFontFamily{U}{MnSymbolC}{}
\DeclareSymbolFont{MnSyC}{U}{MnSymbolC}{m}{n}
\DeclareFontShape{U}{MnSymbolC}{m}{n}{
<-6>  MnSymbolC5
<6-7>  MnSymbolC6
<7-8>  MnSymbolC7
<8-9>  MnSymbolC8
<9-10> MnSymbolC9
<10-12> MnSymbolC10
<12->   MnSymbolC12}{}
\DeclareMathSymbol{\intprod}{\mathbin}{MnSyC}{'270}
}{%
\newcommand{\intprod}{\mathbin{\lrcorner}}
}

\makeatletter
\newcommand*{\da@rightarrow}{\mathchar"0\hexnumber@\symAMSa 4B }
\newcommand*{\da@leftarrow}{\mathchar"0\hexnumber@\symAMSa 4C }
\newcommand*{\xdashrightarrow}[2][]{%
  \mathrel{%
    \mathpalette{\da@xarrow{#1}{#2}{}\da@rightarrow{\,}{}}{}%
  }%
}
\newcommand{\xdashleftarrow}[2][]{%
  \mathrel{%
    \mathpalette{\da@xarrow{#1}{#2}\da@leftarrow{}{}{\,}}{}%
  }%
}
\newcommand*{\da@xarrow}[7]{%
  \sbox0{$\ifx#7\scriptstyle\scriptscriptstyle\else\scriptstyle\fi#5#1#6\m@th$}%
  \sbox2{$\ifx#7\scriptstyle\scriptscriptstyle\else\scriptstyle\fi#5#2#6\m@th$}%
  \sbox4{$#7\dabar@\m@th$}%
  \dimen@=\wd0 %
  \ifdim\wd2 >\dimen@
    \dimen@=\wd2 %   
  \fi
  \count@=2 %
  \def\da@bars{\dabar@\dabar@}%
  \@whiledim\count@\wd4<\dimen@\do{%
    \advance\count@\@ne
    \expandafter\def\expandafter\da@bars\expandafter{%
      \da@bars
      \dabar@ 
    }%
  }%  
  \mathrel{#3}%
  \mathrel{%   
    \mathop{\da@bars}\limits
    \ifx\\#1\\%
    \else
      _{\copy0}%
    \fi
    \ifx\\#2\\%
    \else
      ^{\copy2}%
    \fi
  }%   
  \mathrel{#4}%
}
\makeatother
\usepackage{geometry}
\title[Griffiths positivity and Top-degree optimal estimates] 
{A characterization of Griffiths semipositivity  \\ by top-degree optimal $\dbar$-estimates}

\author{Shin-ichi MATSUMURA}
\address{
Mathematical Institute 
$\&$ Division for the Establishment of Frontier Science of Organization for Advanced Studies, 
Tohoku University, 
6-3, Aramaki Aza-Aoba, Aoba-ku, Sendai 980-8578, Japan.}
\email{\tt mshinichi-math@tohoku.ac.jp}
\email{\tt mshinichi0@gmail.com}

\date{\today, version 0.01}

\subjclass[2020]{Primary 32L05, Secondary 32W05, 32U05.}

\keywords
{Singular Hermitian metrics, $\dbar$-equations, optimal $L^{2}$-estimates, Griffiths positivity, Nakano positivity.}

\begin{document}

\begin{abstract}
We prove that a singular Hermitian metric $h$ on a vector bundle agrees almost everywhere with a Griffiths semipositive metric if its dual metric $h^*$ is locally integrable and the $\dbar$-equation admits solutions with optimal $L^2$-estimates for $(n,n)$-forms, where $n$ is the dimension of the base manifold. In particular, under this integrability assumption, Nakano semipositivity in the sense of optimal estimates implies Griffiths semipositivity.
\end{abstract}

\maketitle

\section{Introduction}\label{sec-intro}

The theory of singular Hermitian metrics on vector bundles has developed rapidly
(see, for example, \cite{DNWZ23, GMY22, LXYZ26, DNWZ24, IM24, Wat25, Wat26}).
A fundamental difficulty is that curvature cannot, in general, be defined in a suitable sense
(see \cite{Rau15}).
Nevertheless, Griffiths semipositivity can be defined in terms of plurisubharmonicity,
without using curvature.
An analogous analytic formulation of Nakano semipositivity was introduced in \cite{DNWZ23}
(see Definition~\ref{def-top-optimal}).
To recall this formulation, let $E$ be a vector bundle on a complex manifold $X$,
and let $h$ be a singular Hermitian metric on $E$
(see \cite{HPS18} for the precise definition).
Roughly speaking, we say that $h$ is Nakano semipositive if the $\dbar$-equation $\dbar u =g$
for an $E$-valued $(n,q)$-form $g$ can be solved with \textit{optimal $L^2$-estimates}
for every $1 \leq q \leq n$
(see Definition~\ref{def-top-optimal}), where $n:=\dim X$.

When $h$ is smooth, this analytic definition coincides with the usual definition
in terms of the curvature $\sqrt{-1}\Theta_{h}(E)$, as proved in \cite{DNWZ23}.
In this case, Nakano semipositivity implies Griffiths semipositivity directly from the definitions.
However, for singular Hermitian metrics, the corresponding implication has been an open problem
and a major obstacle to developing a theory of Nakano positivity.
This problem is formulated in \cite[Question~7.3]{Ina22}
(see also \cite[Remark~1.9]{DNW21})
and is the main subject of this paper.

Assuming lower semicontinuity of $h$ and strong upper semicontinuity of the dual metric $h^{*}$,
Liu proves in \cite[Theorem~1.7]{Liu26} (see also \cite{Liu25}) that $h$ is Griffiths semipositive
if it satisfies optimal $L^2$-estimates for $(n,1)$-forms or $(1,n)$-forms
(or the corresponding multiple coarse $L^2$-estimate condition).
This gives an affirmative answer to \cite[Question~7.3]{Ina22} under these assumptions.
A key tool is convolution regularization that preserves optimal $L^2$-estimates
for $(n,1)$-forms or $(1,n)$-forms.
His proof uses these semicontinuity assumptions and, in the $(n,1)$-case,
combines optimal $L^2$-estimates for $(n,1)$-forms with induction on the dimension
and the coherence of $\mathcal{E}(h)$.

We prove Griffiths semipositivity from optimal $\dbar$-estimates for $(n,n)$-forms alone,
under the assumption that $h^{*}$ is locally integrable,
without assuming semicontinuity of the metric or optimal $L^2$-estimates for $(n,1)$-forms.

\begin{thm}\label{thm-main}
Let $E$ be a vector bundle on a complex manifold $X$ of dimension $n$.
Let $h$ be a singular Hermitian metric on $E$ whose dual metric $h^*$ is locally integrable
$($i.e., $|u|_{h^{*}}^2\in L^1_{\mathrm{loc}}$ for every local section $u$
of the dual vector bundle $E^{*}$$)$.
If $h$ satisfies the optimal $L^{2}$-estimate condition in Definition~\ref{def-top-optimal} for $q=n$,
then there exists a singular Hermitian metric
$\widehat h$ on $E$ such that $\widehat h$ is Griffiths semipositive
and $\widehat h=h$ almost everywhere.
\end{thm}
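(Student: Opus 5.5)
Since Griffiths semipositivity of $h$ is equivalent to plurisubharmonicity of $\log|u|_{h^*}$ (equivalently of $|u|_{h^*}^2$) for every local holomorphic section $u$ of $E^*$, I will work entirely with the dual metric. The plan has three steps: derive a pointwise sub-mean-value inequality for $|u|^2_{h^*}$ from the $(n,n)$-estimate; upgrade it to plurisubharmonicity after a canonical modification on a null set; and check that the modification defines a singular Hermitian metric $\widehat h$.

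\textbf{Step 1.} Fix a coordinate ball $B\Subset X$ on which $E$ is trivial, and a holomorphic section $u$ of $E^*$ on $B$. For an $E$-valued $(n,n)$-form $g$ compactly supported in $B$ with finite $h$-norm, Definition~\ref{def-top-optimal} with $q=n$ (tested against a smooth strictly psh weight $\psi$) gives $v$ with $\dbar v=g$ and an optimal bound. I will then use the standard duality argument. Pairing $u$ with $g$ gives
\begin{equation*}
\int_B \langle g,u\rangle=\int_B\langle \dbar v,u\rangle=\pm\int_B\langle v,\dbar u\rangle=0
\end{equation*}
whenever $g$ is $\dbar$-exact with compact support. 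Concretely, I take $g=\dbar(\chi\,\cdot)$-type forms. These are built from differences $\rho_\epsilon(\cdot-x)\,\xi-\rho_\epsilon(\cdot-y)\,\xi$ of mollifier bumps, or from a bump against its average over a small sphere. The optimal constant converts the Cauchy--Schwarz bound into the inequality
\begin{equation*}
|\langle \xi,u\rangle|^2\le \int |\xi|^2_{h}\,\cdot\,(\text{mean of }|u|_{h^*}^2),
\end{equation*}
up to constants that tend to $1$. Local integrability of $h^*$ is exactly what makes the relevant integrals $\int|u|^2_{h^*}\rho_\epsilon$ finite, so that these test forms are admissible. The expected outcome of Step 1 is that for every $x$ and every small $r$ the mean-value inequality
\begin{equation*}
(\text{mean of }|u|_{h^*}^2 \text{ over small balls}) \text{ at } x \ \ge\ |u|_{h^*}^2(x)
\end{equation*}
holds a.e., and in fact the means $m_r(x):=\frac{1}{|B_r|}\int_{B_r(x)}|u|^2_{h^*}$ are psh in $x$ and increasing in $r$. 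I will get this for $|f u|^2_{h^*}$ with all holomorphic $f$, and with all linear coordinate changes, so that the family of log-means behaves like a psh function.

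\textbf{Step 2.} Define $\widehat\varphi_u:=\lim_{r\to0}m_r=\inf_r m_r$. This is a decreasing limit of psh functions, hence psh or $\equiv-\infty$. By Lebesgue differentiation it equals $|u|^2_{h^*}$ almost everywhere. Applying this to $e^{\mathrm{Re}\,p}u$ for holomorphic polynomials $p$ gives that $\log\widehat\varphi_u$ is psh, by the standard criterion that $\varphi e^{\mathrm{Re} p}$ psh for all $p$ implies $\log\varphi$ psh.

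\textbf{Step 3.} I need $\widehat h^*(x)(\xi,\xi):=\lim_r \frac{1}{|B_r|}\int_{B_r(x)}|\xi|^2_{h^*}$, computed with constant frames, to be a genuine Hermitian form at every point. It is a limit of Hermitian forms, so it is a Hermitian semi-form valued in $[0,+\infty]$, and the parallelogram law passes to the limit. I must also check that it is independent of the local frame. This follows because changing frame by a holomorphic matrix $A(x)$ alters the means by $o(1)$, using continuity of $A$ and local integrability. Then $\widehat h:=(\widehat h^*)^*$ is a singular Hermitian metric that agrees with $h$ a.e., and it is Griffiths semipositive by Step 2.

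\textbf{Main obstacle.} The hardest step is Step 1: extracting an exact, constant-$1$ sub-mean inequality for $|u|^2_{h^*}$ at top degree. Only $(n,n)$-forms are available, so I cannot use the $(n,1)$ slicing arguments. My first attempt will be to choose $\psi=-\log$-type weights concentrated at $x$ (Ohsawa--Takegoshi-style), so that the optimal constant produces precisely the mean over a ball. I will then let the weight degenerate and control the error using $L^1_{\mathrm{loc}}$ of $h^*$.
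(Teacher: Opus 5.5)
Your Steps 2 and 3 are a workable way to finish once you know that each $|u|^2_{h^*}$ agrees a.e. with a psh function; they play the role of the paper's final decreasing-limit step. The gap is Step 1, which carries the whole content of the theorem. You state it as an ``expected outcome'' rather than derive it, and the mechanisms you sketch do not work.

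First, the identity $\int\langle g,u\rangle=\pm\int\langle v,\dbar u\rangle=0$ is either unjustified or vacuous. The solution $v$ from Definition~\ref{def-top-optimal} is only $L^2$ on a Stein domain and has no compact support, and neither does your holomorphic $u$, so the integration by parts has uncontrolled boundary terms. If instead you restrict to $g=\dbar w$ with $w$ compactly supported, the identity holds trivially and never uses the bound on $v$. Pairing against a holomorphic section of $E^*$ annihilates exactly the information the optimal estimate provides.

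Second, your test forms $\rho_\epsilon(\cdot-x)\xi-\rho_\epsilon(\cdot-y)\xi$ with fixed $\xi\in E$ are admissible only if $\int|\xi|_h^2\rho_\epsilon^2\,dV<\infty$. That is local integrability of $h$, not of $h^*$, so the hypothesis says nothing about them.

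Third, claiming the means $m_r$ are psh in $x$ presupposes that $|u|^2_{h^*}$ is already a.e. psh. Also, the $-\log$-type weights are excluded by Definition~\ref{def-top-optimal}, which requires smooth $\psi$ with $\idelbar\psi>0$, unless you supply an approximation argument.

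The missing idea is to dualize with a smooth, compactly supported, non-holomorphic section $\sigma$ of $E^*$ and to build $g$ from $\sigma$ itself.

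\emph{The dual inequality.} In top degree $B_{\omega,\psi}=a_\psi\,\id$ with $a_\psi=\Lambda_\omega(\idelbar\psi)$. Set $g:=a_\psi e^{\psi}\sharp_h(\sigma)\,dV_\omega$, where $\sharp_h:E^*\to E$ is the Riesz map. Then $|g|_{h,\omega}=a_\psi e^{\psi}|\sigma|_{h^*}$, so admissibility uses exactly $h^*\in L^1_{\mathrm{loc}}$. The optimal bound $L$ equals $\int\langle\sigma,g\rangle=\int\langle\dbar\sigma,v\rangle$; this integration by parts is legitimate because $\sigma$ has compact support. Cauchy--Schwarz then gives
\[
\int a_\psi|\sigma|^2_{h^*}e^{\psi}\,dV_\omega\le\int|\dbar\sigma|^2_{h^*,\omega}e^{\psi}\,dV_\omega .
\]

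\emph{Passing to the regularization.} Both sides are linear in $h^*$. With a constant-coefficient $\omega$ and translated weights, the inequality therefore survives the convolution $h^*_\epsilon=\int\rho_\epsilon(c)\tau_c^*h^*\,dc$.

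\emph{Localizing on the smooth metric.} Only now does one localize, on the smooth $h_\epsilon$. Use the Bochner formula with $\sigma=\chi e^{-\psi}h_\epsilon(\bullet,v)$, $v$ holomorphic in $E$, so that $D'\sigma$ involves only $\partial\chi$. Then the Gaussian weights $\psi_t=t\|y-x\|^2$ give $\langle\Lambda_\omega(\sqrt{-1}\Theta_{h_\epsilon})v,v\rangle\geq0$ for every constant-coefficient Kähler $\omega$. Hence $h_\epsilon$ is Griffiths semipositive, and your Steps 2--3 then correspond to taking the decreasing limit as $\epsilon\to0$.

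Note that the effective test sections are $h$-duals of holomorphic sections of $E$, not holomorphic sections of $E^*$. For singular $h$ these are not smooth. This is why the paper transfers the inequality to $h^*_\epsilon$ by linearity before localizing, instead of attempting a direct mean-value argument on $h$ with only $L^1_{\mathrm{loc}}$ control.
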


In the smooth case, Griffiths semipositivity is equivalent to
the optimal $L^2$-estimate condition for $(n,n)$-forms
with respect to every K\"ahler form
(see \cite{Wat24, Wat25}), so our hypothesis is natural.
Moreover, a Griffiths semipositive singular Hermitian metric can be locally approximated
by smooth metrics using convolution and hence satisfies
optimal $\dbar$-estimates for $(n,n)$-forms.
This result provides further support for the formulation of Nakano semipositivity through optimal estimates.

Our proof uses techniques from \cite{Liu26},
but the main difference is that we cannot use
either induction on the dimension or the coherence of $\mathcal{E}(h)$.
Note that the coherence result for $\mathcal{E}(h)$ requires
optimal $L^2$-estimates for $(n,1)$-forms.

One motivation of this study is to develop a theory of Nakano positivity
for (higher) direct images of adjoint sheaves,
generalizing the theory of Griffiths positivity
established in \cite{Ber09, BP08, PT18, HPS18, TM1, TM2}.
Combined with the optimal $L^2$-estimates of \cite{IMW},
Theorem~\ref{thm-main} recovers, up to almost-everywhere equality,
the Griffiths semipositivity of the canonical $L^2$-metrics on direct images of adjoint bundles
studied in \cite{PT18, HPS18}.

\subsection*{Acknowledgments}\label{subsec-ack}
The author is grateful to Professors Zhuo Liu and Yuta Watanabe for their helpful comments.
He was partially supported by the JST FOREST Program, Grant Number JPMJFR2368.
Generative AI tools were used as a supplementary aid for English-language editing. 
The author takes full responsibility for the content of this paper.

\section{Proof of Theorem \ref{thm-main}}

Before proving Theorem \ref{thm-main},
we recall the definition of Nakano semipositivity in terms of optimal $L^{2}$-estimates.

\begin{defn}\label{def-top-optimal}
Let $X$ be a complex manifold of dimension $n$, and let $E$ be a vector bundle on $X$ of rank $r$.
Let $h$ be a singular Hermitian metric that is finite and positive definite almost everywhere
(see \cite{HPS18} for the precise definition).
The metric $h$ is said to be \textit{Nakano semipositive}
if the following holds: for data consisting of 
\begin{itemize}
\item a Stein coordinate neighbourhood $\Omega\subset X$;
\item a K\"ahler form $\omega_\Omega$ on $\Omega$;
\item a smooth function $\psi$ on $\Omega$ such that $\deldel\psi>0$;
\item a positive integer $q$ with $1\leq q\leq n$;
\item a $\overline{\partial}$-closed $g\in L^2_{n,q}(\Omega,E;\omega_\Omega,he^{-\psi})$
satisfying
$$\int_\Omega\langle B^{-1}_{\omega_\Omega,\psi}g,g\rangle_{\omega_\Omega,h}e^{-\psi}\,dV_{\omega_\Omega}<\infty,$$
\end{itemize}
there exists $u\in L^2_{n,q-1}(\Omega,E;\omega_\Omega,he^{-\psi})$ such that
\[
  \overline{\partial}u=g
  \quad\text{and}\quad
  \int_\Omega |u|^2_{\omega_\Omega,h}e^{-\psi}\,dV_{\omega_\Omega}
  \leq
  \int_\Omega\langle B^{-1}_{\omega_\Omega,\psi}g,g\rangle_{\omega_\Omega,h}
       e^{-\psi}\,dV_{\omega_\Omega},
\]
where $B_{\omega_\Omega,\psi}=[(\deldel\psi)\otimes\Id_E,\Lambda_{\omega_\Omega}]$.
\end{defn}

\begin{remark}\label{main-rem}
The assumption in Theorem \ref{thm-main} consists only of
\textit{top-degree optimal $\dbar$-estimates}.
In particular, the theorem applies to Nakano semipositive metrics in the above sense
whose dual metrics are locally integrable.
The latter condition holds whenever the dual metric is locally essentially bounded.
\end{remark}

We prove Theorem \ref{thm-main}. 

\begin{proof}[Proof of Theorem \ref{thm-main}]
We first outline the proof.
For a smooth Hermitian metric $h$,
the optimal $\dbar$-estimate condition implies Griffiths semipositivity by \cite{DNWZ23}.
To extend this approach to singular metrics, we use the standard approximation of $h$
obtained by taking the dual, convolving, and taking the dual again
(see \eqref{eq-regularization}).
A difficulty is that it is not clear how the optimal $\dbar$-estimate condition behaves under this regularization.
Following the duality and convolution argument of \cite[Section~3]{Liu26},
we focus on the formulation \eqref{eq-dual-inequality}.
In top degree, the curvature operator associated with the weight is scalar,
and this inequality depends linearly on $h^*$.
This linearity allows us to obtain the same inequality for $h^*_{\e}$.
Then, an analogue of the argument of \cite{DNWZ23} gives Griffiths semipositivity of $h_{\e}$.

\smallskip

We now prove Theorem \ref{thm-main}.
The problem is local on $X$.
We may assume that $X$ is an open ball in $\mathbb{C}^{n}$ with coordinates $(z_{1},\dots,z_{n})$
and fix a trivialization of $E$ on $X$.
Let $\omega$ be a K\"ahler form with constant coefficients in the coordinates $(z_{1},\dots,z_{n})$.
Then $\omega$ is invariant under translations $z\mapsto z+c$.
Let $\psi$ be a smooth strictly plurisubharmonic function on $X$.
We write
\[
  dV_\omega:=\frac{\omega^n}{n!},
  \qquad
  B_{\omega,\psi}:=[\idelbar\psi,\Lambda_\omega],
\]
where $\Lambda_\omega$ is the adjoint of exterior product by
$\omega$. 
On $(n,n)$-forms, the curvature operator $B_{\omega,\psi}$ takes the form
\[
  B_{\omega,\psi}=a_\psi\id,
 \]
where $a_\psi$ is the smooth positive function defined by $ a_\psi:=\Lambda_\omega(\idelbar\psi)$.

At almost every point $x\in X$,
we define the antilinear isomorphism
$$
\sharp_h:E_x^*\to E_x
\quad\text{by}\quad
\langle\sigma_x,\sharp_h\sigma_x\rangle=|\sigma_x|_{h^*}^2,
$$
where $h^{*}$ is the dual metric of $h$. 
Let $\sigma\in C_c^\infty(X,E^*)$ be a compactly supported smooth section of $E^*$.
We define the $E$-valued $(n,n)$-form $g$ by
$$
  g:=a_\psi e^\psi\sharp_h(\sigma)\,dV_\omega.
$$
The form $g$ satisfies the assumptions of Definition \ref{def-top-optimal}.
Indeed $g$ is $\dbar$-closed since it has type $(n,n)$, and we have
\[
\begin{split}
  &\int_X |g|_{h,\omega}^2e^{-\psi}\,dV_\omega
  =
  \int_X (a_\psi)^2|\sigma|_{h^*}^2e^\psi\,dV_\omega
  \quad \text{ and }\\
  &L:=\int_X\langle B^{-1}_{\omega,\psi}g,g\rangle_{\omega,h}
       e^{-\psi}\,dV_\omega=
  \int_X
  \frac{|g|_{h,\omega}^2}{a_\psi}e^{-\psi}\,dV_\omega
  =
  \int_X a_\psi|\sigma|_{h^*}^2e^\psi\,dV_\omega.
\end{split}
\]
Both integrals are finite since $h^*$ is locally integrable and $\sigma$ has compact support.
Then, by the optimal $\dbar$-estimate condition, we can find an $E$-valued $(n,n-1)$-form $u$ on $X$
such that
\begin{equation}\label{eq-u-bound}
\dbar u=g
\quad\text{and}\quad
  \int_X |u|_{h,\omega}^2e^{-\psi}\,dV_\omega\leq L.
\end{equation}
Since $h^*$ is locally integrable, the Cauchy--Schwarz inequality implies that
$u$ and $g$ have locally integrable coefficients. 
%Indeed, for every compact subset $K\Subset X$,
%\[
%  \int_K |u|_{0,\omega}\,dV_\omega
%  \leq
%  \left(\int_K |u|_{h,\omega}^2e^{-\psi}\,dV_\omega\right)^{1/2}
%  \left(\int_K \tr(h^*)e^\psi\,dV_\omega\right)^{1/2},
%\]
%where $|\cdot|_{0,\omega}$ uses the Euclidean metric in the chosen trivialization of $E$,
%and $\tr(h^*)$ is the trace in the corresponding dual frame. The same argument applies to $g$.
Thus, we can apply integration by parts in the sense of distributions and the Cauchy--Schwarz inequality. 
Then, we obtain 
\[
\begin{split}
  L
  &=\left|\int_X\langle\sigma,g\rangle\right|
   =\left|\int_X\langle\dbar\sigma,u\rangle\right|\\
  &\leq
  \left(\int_X|\dbar\sigma|_{h^*,\omega}^2e^\psi\,dV_\omega\right)^{1/2}
  \left(\int_X|u|_{h,\omega}^2e^{-\psi}\,dV_\omega\right)^{1/2} \\
  &\leq\left(\int_X|\dbar\sigma|_{h^*,\omega}^2e^\psi\,dV_\omega\right)^{1/2} \cdot L^{1/2}.
\end{split}
\]
This yields 
\begin{equation}\label{eq-dual-inequality}
  \int_X a_\psi|\sigma|_{h^*}^2e^\psi\,dV_\omega
  \leq
  \int_X|\dbar\sigma|_{h^*,\omega}^2e^\psi\,dV_\omega.
\end{equation}
A point is that both sides are linear in the Hermitian form $h^*$.

Choose a nonnegative radial function $\rho\in C_c^\infty(\mathbb{C}^n)$ with
$\int_{\mathbb{C}^n}\rho\,dc=1$, where $dc$ denotes Lebesgue measure, and set
$
  \rho_\e(c):=\e^{-2n}\rho(c/\e).
$
On a smaller coordinate ball $\Omega\Subset X$,
we consider the translation $\tau_c$ defined by $z\mapsto z+c$, 
and define
\begin{equation}\label{eq-regularization}
  h_\e^*:=\int_{\mathbb{C}^n}\rho_\e(c)\tau_c^*h^*\,dc,
  \qquad h_\e:=(h_\e^*)^*.
\end{equation}
These are smooth, positive definite metrics on $\Omega$
for sufficiently small $\e$. Positive definiteness follows since $h^*$ is
positive definite almost everywhere and the kernel is nonnegative.
The regularized metrics are independent of the choice of $\omega$.

We will show that \eqref{eq-dual-inequality} holds locally for the regularized metric $h_{\e}^{*}$ (see \eqref{eq-smooth-dual-inequality}). 
Suppose that $\Omega+c\Subset X$.
For $\sigma\in C_c^\infty(\Omega,E^*)$,
we consider 
\[
  \sigma_c(w)=\sigma(w-c),\qquad \psi_c(w)=\psi(w-c)
\]
on $\Omega+c$. 
Since $\omega$ is translation invariant,
\[
  \tau_c^*dV_\omega=dV_\omega,\qquad
  \tau_c^*a_{\psi_c}=a_\psi,\qquad
  \tau_c^*(\dbar\sigma_c)=\dbar\sigma.
\]
By applying \eqref{eq-dual-inequality} on $\Omega+c$ to $h^*$, $\sigma_c$, and
$\psi_c$ and substituting $w=z+c$, we obtain 
\begin{equation}\label{eq-translated}
  \int_\Omega a_\psi|\sigma|_{\tau_c^*h^*}^2e^\psi\,dV_\omega
  \leq\int_\Omega|\dbar\sigma|_{\tau_c^*h^*,\omega}^2e^\psi\,dV_\omega.
\end{equation}
This means that the estimate applies on the translated domain with the translated
weight. For each fixed vector or form, the squared norm is linear in
the Hermitian metric. Consequently, we see that
\[
\begin{split}
  |\sigma|_{h_\e^*}^2
  &=\int_{\mathbb{C}^n}\rho_\e(c)|\sigma|_{\tau_c^*h^*}^2\,dc,\\
  |\dbar\sigma|_{h_\e^*,\omega}^2
  &=\int_{\mathbb{C}^n}\rho_\e(c)|\dbar\sigma|_{\tau_c^*h^*,\omega}^2\,dc
\end{split}
\]
hold pointwise on $\Omega$. 
Multiplying \eqref{eq-translated} by $\rho_\e(c)$ and integrating
in $c$, we obtain 
\begin{equation}\label{eq-smooth-dual-inequality}
  \int_\Omega a_\psi|\sigma|_{h_\e^*}^2e^\psi\,dV_\omega
  \leq
  \int_\Omega|\dbar\sigma|_{h_\e^*,\omega}^2e^\psi\,dV_\omega
\end{equation}
by Fubini's theorem.

Let $D'_{h_\e^*e^\psi}$ be the $(1,0)$-part of the Chern connection of $h_\e^*e^\psi$. 
The Bochner formula shows that 
\[
  \|\dbar\sigma\|_{h_\e^*e^\psi,\omega}^2
  =\|D'_{h_\e^*e^\psi}\sigma\|_{h_\e^*e^\psi,\omega}^2
  -\int_\Omega
  \la\Lambda_\omega(\sqrt{-1}\Theta_{h_\e^*e^\psi})\sigma,\sigma\ra_{h_\e^*e^\psi}\,dV_\omega.
\]
By 
$
  \sqrt{-1}\Theta_{h_\e^*e^\psi}
  =\sqrt{-1}\Theta_{h_\e^*}-\idelbar\psi\,\id,
$
and \eqref{eq-smooth-dual-inequality}, 
we obtain 
\begin{equation}\label{eq-bochner-bound}
  0\leq\|D'_{h_\e^*e^\psi}\sigma\|_{h_\e^*e^\psi,\omega}^2
  -\int_\Omega
  \la\Lambda_\omega(\sqrt{-1}\Theta_{h_\e^*})\sigma,\sigma\ra_{h_\e^*e^\psi}\,dV_\omega.
\end{equation}

We will deduce that $h_{\e}$ is Griffiths semipositive from \eqref{eq-bochner-bound}. 
Fix a point $x\in\Omega$ and a vector $v_x\in E_x$. 
We take a holomorphic section $v$ on $\Omega$ with $v(x)=v_x$. 
Let $\chi\in C_c^\infty(\Omega,\R)$ be equal to $1$ near $x$. 
We define a compactly supported smooth section $\sigma$ of $E^{*}$ by
\[
  \sigma_y(w):=\chi(y)e^{-\psi(y)}h_{\e,y}(w,v(y))
  \quad (y\in\Omega,\ w\in E_y).
\]
Since $v$ is holomorphic, we have 
$D'_{h_\e^*}h_\e(\bullet,v)=0$. Hence, we obtain 
\[
  D'_{h_\e^*e^\psi}\sigma=e^{-\psi}\partial\chi\cdot h_\e(\bullet,v).
\]
Using the isomorphism $\sharp_{h_\e}$ and the curvature identity for the dual metric, we obtain
\begin{equation}\label{eq-cutoff-curvature}
  0\leq\int_\Omega
  \left(
    |\partial\chi|_\omega^2|v|_{h_\e}^2
    +\chi^2\la\Lambda_\omega(\sqrt{-1}\Theta_{h_\e})v,v\ra_{h_\e}
  \right)e^{-\psi}\,dV_\omega.
\end{equation}

Consider the strictly psh function $\psi_t(y):=t\|y-x\|^2$ on $\mathbb{C}^n$,
where $\|\cdot\|$ denotes the standard Euclidean norm, and put
\[
  Z_t:=\int_\Omega e^{-\psi_t}\,dV_\omega\asymp t^{-n} \quad (t \to \infty).
\]
Here, the asymptotic behavior follows from the change of variables $w=\sqrt{t}(y-x)$.
From now on, we fix $\e$ and $\omega$. Set
\[
  F=\la\Lambda_\omega(\sqrt{-1}\Theta_{h_\e})v,v\ra_{h_\e}.
\]
We can choose $\delta>0$ such that $\|y-x\|\geq\delta$ on $\Supp(\partial\chi)$. 
Therefore, after division by $Z_t$, the first term of \eqref{eq-cutoff-curvature} converges to zero:
\[
  0\leq\frac{1}{Z_t}\int_\Omega |\partial\chi|_\omega^2|v|_{h_\e}^2 e^{-\psi_t}\,dV_\omega
  \leq Ct^n e^{-t\delta^2}\longrightarrow 0.
\]
For the second term involving the curvature, we consider the change of variables $w=\sqrt{t}(y-x)$. 
Then, we obtain 
\[
\begin{split}
  t^n Z_t
  &=\int_{\sqrt{t}(\Omega-x)}e^{-\|w\|^2}\,dV_\omega(w),\\
  t^n\int_\Omega\chi^2Fe^{-\psi_t}\,dV_\omega
  &=\int_{\sqrt{t}(\Omega-x)}
  (\chi^2F)(x+w/\sqrt{t})e^{-\|w\|^2}\,dV_\omega(w).
\end{split}
\]
Extend $\chi^2F$ by zero outside $\Omega$.
Then $(\chi^2F)(x+w/\sqrt{t})$ is uniformly bounded and converges pointwise to $F(x)$,
since $\chi=1$ near $x$.
Since $\omega$ has constant coefficients, $dV_\omega$ is a constant multiple of Lebesgue measure.
The dominated convergence theorem therefore shows that
the first integral converges to $C_\omega=\int_{\mathbb{C}^n}e^{-\|w\|^2}\,dV_\omega>0$ 
and the second integral converges to $C_\omega F(x)$. 
Hence, we obtain 
\[
  \frac{1}{Z_t}\int_\Omega\chi^2Fe^{-\psi_t}\,dV_\omega\longrightarrow F(x).
\]
Dividing \eqref{eq-cutoff-curvature} by $Z_t$ and taking the limit, we can conclude that 
\begin{equation}\label{eq-trace-positive}
F(x)=
  \la\Lambda_\omega(\sqrt{-1}\Theta_{h_\e})(x)v_x,v_x\ra_{h_\e}\geq 0.
\end{equation}

Consequently, for fixed $x$ and $v_x$, the scalar $(1,1)$-form
\[
  Q:=\la\sqrt{-1}\Theta_{h_\e}(x)v_x,v_x\ra_{h_\e}
\]
satisfies $\Lambda_\omega Q\geq 0$ for 
any K\"ahler form $\omega$ with constant coefficients. 
Thus $Q\geq0$, and $h_\e$ is Griffiths semipositive.
Equivalently, $\log|u|_{h_\e^*}$ is psh for every local section $u$ of $E^*$.
For every section $a$ of $E^*$, the functions $|a|_{h_\e^*}^2$ are psh
and converge in $L^1_{\mathrm{loc}}$ to $f_a:=|a|_{h^*}^2$.
Hence $f_a$ admits a psh representative on $X$, since $\Omega\Subset X$ was arbitrary.
Since $\rho$ is radial, its convolutions
$f_a*\rho_\e=|a|_{h_\e^*}^2$ decrease as $\e\downarrow0$.
Consequently, we see that $h_\e^*$ decreases to a singular Hermitian metric
$\widehat h^*$ with $\widehat h^*=h^*$ almost everywhere.
For every local holomorphic section $u$ of $E^*$, the psh functions
$\log|u|_{h_\e^*}$ decrease to $\log|u|_{\widehat h^*}$, which is  psh.
Thus $\widehat h^*$ is Griffiths seminegative, and its dual $\widehat h$ is Griffiths semipositive
and satisfies $\widehat h=h$ almost everywhere.
\end{proof}

\bibliographystyle{alpha}

\begin{thebibliography}{99}
\bibitem[Ber09]{Ber09}
B.~Berndtsson,
\textit{Curvature of vector bundles associated to holomorphic fibrations},
Ann. of Math. (2) \textbf{169} (2009), no.~2, 531--560.

\bibitem[BP08]{BP08}
B.~Berndtsson and M.~P{\u{a}}un,
\textit{Bergman kernels and the pseudoeffectivity of relative canonical bundles},
Duke Math. J. \textbf{145} (2008), no.~2, 341--378.

\bibitem[DNW21]{DNW21}
F.~Deng, J.~Ning and Z.~Wang,
\textit{Characterizations of plurisubharmonic functions},
Sci. China Math. \textbf{64} (2021), no.~9, 1959--1970.

\bibitem[DNWZ23]{DNWZ23}
F.~Deng, J.~Ning, Z.~Wang and X.~Zhou,
\textit{Positivity of holomorphic vector bundles in terms of
$L^p$-estimates for $\overline{\partial}$},
Math. Ann. \textbf{385} (2023), no.~1--2, 575--607.

\bibitem[DNWZ24]{DNWZ24}
F.~Deng, J.~Ning, Z.~Wang and X.~Zhou,
\textit{Approximation and extension of Hermitian metrics on holomorphic
vector bundles over Stein manifolds},
C. R. Math. Acad. Sci. Paris \textbf{362} (2024), 1707--1715.

\bibitem[GMY22]{GMY22}
Q.~Guan, Z.~Mi and Z.~Yuan,
\textit{Optimal $L^2$ extension for holomorphic vector bundles
with singular Hermitian metrics},
Peking Math. J. \textbf{9} (2026), no.~1, 105--193.

\bibitem[HPS18]{HPS18}
C.~Hacon, M.~Popa and C.~Schnell,
\textit{Algebraic fiber spaces over abelian varieties:
around a recent theorem by Cao and P{\u{a}}un},
in Local and global methods in algebraic geometry,
Contemp. Math., vol.~712,
Amer. Math. Soc., Providence, RI, 2018, pp.~143--195.

\bibitem[Ina22]{Ina22}
T.~Inayama,
\textit{Nakano positivity of singular Hermitian metrics and vanishing
theorems of Demailly--Nadel--Nakano type},
Algebr. Geom. \textbf{9} (2022), no.~1, 69--92.

\bibitem[IM24]{IM24}
T.~Inayama and S.~Matsumura,
\textit{Nakano positivity of singular Hermitian metrics:
Approximations and applications},
J. Funct. Anal. \textbf{290} (2026), no.~1, 111206.

\bibitem[IMW24]{IMW}
T.~Inayama, S.~Matsumura and Y.~Watanabe,
\textit{Singular Nakano positivity of direct image sheaves of adjoint bundles},
to appear in Anal. PDE; arXiv:2407.11412 (2024).

\bibitem[Liu-a]{Liu25}
Z.~Liu,
\emph{An optimal $L^{2}$ extension for continuous $L^{2}$-optimal Hermitian metrics},
preprint, arXiv:2506.22840v1 (2025).

\bibitem[Liu-b]{Liu26}
Z.~Liu,
\emph{Convolution regularization preserves the $L^2$-estimate property
for $(1,n)$-forms},
preprint, arXiv:2506.22834v2 (2026).

\bibitem[LXYZ26]{LXYZ26}
Z.~Liu, B.~Xiao, H.~Yang and X.~Zhou,
\textit{Multiplier submodule sheaves, $L^2$ extension and a problem of Lempert},
J. Reine Angew. Math. \textbf{833} (2026), 111--157.

\bibitem[MT08]{TM1}
C.~Mourougane and S.~Takayama,
\textit{Hodge metrics and the curvature of higher direct images},
Ann. Sci. \'Ec. Norm. Sup\'er. (4) \textbf{41} (2008), no.~6, 905--924.

\bibitem[MT09]{TM2}
C.~Mourougane and S.~Takayama,
\textit{Extension of twisted Hodge metrics for K\"ahler morphisms},
J. Differential Geom. \textbf{83} (2009), no.~1, 131--161.

\bibitem[PT18]{PT18}
M.~P{\u{a}}un and S.~Takayama,
\textit{Positivity of twisted relative pluricanonical bundles
and their direct images},
J. Algebraic Geom. \textbf{27} (2018), no.~2, 211--272.

\bibitem[Rau15]{Rau15}
H.~Raufi,
\textit{Singular Hermitian metrics on holomorphic vector bundles},
Ark. Mat. \textbf{53} (2015), no.~2, 359--382.

\bibitem[Wat24]{Wat24}
Y.~Watanabe,
\emph{$\omega$-trace and Griffiths positivity for singular Hermitian metrics},
preprint, arXiv:2402.06658 (2024).

\bibitem[Wat25]{Wat25}
Y.~Watanabe,
\emph{Curvature operator of holomorphic vector bundles and
$L^2$-estimate condition for $(n,q)$ and $(p,n)$-forms},
Tohoku Math. J. (2) \textbf{77} (2025), no.~4, 445--471.

\bibitem[Wat26]{Wat26}
Y.~Watanabe,
\emph{Griffiths and Nakano positivity and Nakano-Nadel vanishing theorems for singular Hermitian metrics on complex spaces},
preprint, arXiv:2606.16275 (2024).



\end{thebibliography}

\end{document}